\definecolor{webgreen}{rgb}{0,.5,0}
\definecolor{webbrown}{rgb}{.6,0,0}
\newcommand{\seqnum}[1]{\href{http://oeis.org/#1}{\underline{#1}}}
\newcommand*\bigcdot{\mathpalette\bigcdot@{.5}}
\newcommand*\bigcdot@[2]{\mathbin{\vcenter{\hbox{\scalebox{#2}{$\m@th#1\bullet$}}}}}
\begin{document}
\theoremstyle{plain}
\newtheorem{theorem}{Theorem}
\newtheorem{corollary}[theorem]{Corollary}
\newtheorem{lemma}[theorem]{Lemma}
\newtheorem{proposition}[theorem]{Proposition}

\theoremstyle{definition}
\newtheorem{definition}[theorem]{Definition}
\newtheorem{example}[theorem]{Example}
\newtheorem{conjecture}[theorem]{Conjecture}

\theoremstyle{remark}
\newtheorem{remark}[theorem]{Remark}
\newtheorem*{remark-non}{Remark}

\begin{center}
\vskip 1cm{\Large\bf Catalan and Motzkin \\
\vskip .05in
Integral Representations}
\vskip 1cm
Peter McCalla and Asamoah Nkwanta\\
Department of Mathematics\\
Morgan State University\\
Baltimore, MD 21251 \\
USA \\
\href{mailto:peter.mccalla@morgan.edu}{\tt peter.mccalla@morgan.edu}\\
\href{mailto:asamoah.nkwanta@morgan.edu}{\tt asamoah.nkwanta@morgan.edu}\\
\ \\
\end{center}

\vskip .2 in
\begin{abstract}
We present new proofs of eight integral representations of the Catalan numbers. Then, we create analogous integral representations of the Motzkin numbers and obtain new results. Most integral representations of counting sequences found in the literature are proved by using advanced mathematical techniques. All integral representations in this paper are proved by using standard techniques from integral calculus.  Thus, we provide a more simplistic approach of proving integral representations of the Catalan and Motzkin numbers.
\end{abstract}

\section{Introduction}
\label{sec:introduction}

The Catalan numbers \seqnum{A000108} \cite{SP1}  are a sequence of natural numbers that are defined as $C_n = \frac{1}{n+1} {2n\choose n}$  for  $n \in \mathbb{Z}_{\ge 0}=\{0,1,2, \dots\}$. Catalan numbers appear as solutions to several combinatorics problems. See Stanley \cite{STAN, STAN1, STAN2} for a number of combinatorial and analytical interpretations of $C_n$. It is closely related to the Motzkin numbers $M_n$ \seqnum{A001006} \cite{SP1} defined as \cite{BERN}\begin{align}M_n = \sum_{k=0}^{\left\lfloor n/2 \right\rfloor} {n\choose 2k}C_k.  \label{eq1}\end{align}

Finding integral representations of counting numbers is a topic of interest for its intrinsic value. Various integral representations of $C_n$ are given in the literature, derived from methods involving Mellin transforms, Chebyshev polynomials, the Cauchy integral formula, and other advanced techniques that are beyond integral calculus. In this paper, we present several integral representations of $C_n$, then using two new results, prove analogous integral presentations of $M_n$. All representations are proved by using standard techniques from integral calculus. However, the authors do not claim that this paper contains all known integral representations of $C_n$ and $M_n$.

Section 2 surveys the Catalan representations and Section 3 presents the Motzkin representations. What is new in Section 2 are the proofs. Section 3 contains new results: Theorems \ref{thm10} and \ref{thm11} and Corollaries \ref{cor12} and \ref{cor13}.  The only known integral representations of $M_n$ in Section 3 are Corollary \ref{cor12}(e) and (f). They are listed in OEIS \cite{SP1} and there are no references to proofs. The proofs of Corollary \ref{cor12}(e) and (f) are also new. 
\section{Integral representations of $C_n$}

We list eight integral representations of $C_n$. The first representation will be proved directly and subsequent representations will be proved using equivalence. We first prove the following lemma. 

\begin{lemma}\label{lma1} For $r, s \in \mathbb{Z}_{\ge 0}$ and positive real number $a$, \begin{displaymath} \int_0^{a/2} \cos^r\left(\frac{\pi x}{a}\right) \sin^s\left(\frac{\pi x}{a}\right) \, dx = (-1)^r\int_{a/2}^a \cos^r\left(\frac{\pi x}{a}\right) \sin^s\left(\frac{\pi x}{a}\right) \, dx. \end{displaymath}\end{lemma}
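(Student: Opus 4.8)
The plan is to prove this by a single change of variables in the right-hand integral, exploiting the symmetry of sine and cosine about the point $x = a/2$. Specifically, I would substitute $x = a - u$ (equivalently $u = a - x$) in $\int_{a/2}^a \cos^r(\pi x/a)\sin^s(\pi x/a)\,dx$. This sends the interval $[a/2,a]$ in $x$ to the interval $[0,a/2]$ in $u$, with $dx = -du$, and the reversal of orientation cancels the sign from $dx$, so that $\int_{a/2}^a f(x)\,dx = \int_0^{a/2} f(a-u)\,du$ for the relevant integrand $f$.

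Next I would simplify the transformed integrand using the reflection identities $\cos(\pi - \theta) = -\cos\theta$ and $\sin(\pi - \theta) = \sin\theta$. Since $\frac{\pi(a-u)}{a} = \pi - \frac{\pi u}{a}$, we get $\cos^r\!\left(\frac{\pi(a-u)}{a}\right) = (-1)^r\cos^r\!\left(\frac{\pi u}{a}\right)$ and $\sin^s\!\left(\frac{\pi(a-u)}{a}\right) = \sin^s\!\left(\frac{\pi u}{a}\right)$. Pulling the constant $(-1)^r$ out of the integral then yields
\[
\int_{a/2}^a \cos^r\!\left(\frac{\pi x}{a}\right)\sin^s\!\left(\frac{\pi x}{a}\right)dx = (-1)^r\int_0^{a/2}\cos^r\!\left(\frac{\pi u}{a}\right)\sin^s\!\left(\frac{\pi u}{a}\right)du.
\]
Multiplying both sides by $(-1)^r$ and using $(-1)^{2r}=1$ gives exactly the stated identity (equivalently, one reads the displayed equation as the claim itself after renaming $u$ back to $x$).

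There is essentially no serious obstacle here: the only things to be careful about are tracking the orientation of the limits of integration after the substitution and applying the half-period reflection formulas with the correct signs (the factor $(-1)^r$ arising solely from the cosine, since $s$ even or odd is irrelevant for sine under $\theta \mapsto \pi - \theta$). I would also note that $r,s \in \mathbb{Z}_{\ge 0}$ guarantees the integrands are genuine (continuous) functions so no convergence issues arise, and $a > 0$ ensures the substitution is a valid affine change of variable.
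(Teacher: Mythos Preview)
Your proof is correct. The single substitution $x=a-u$ together with the reflection identities $\cos(\pi-\theta)=-\cos\theta$ and $\sin(\pi-\theta)=\sin\theta$ immediately yields the claim, and your bookkeeping with the limits and the factor $(-1)^r$ is accurate.

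By comparison, the paper's proof proceeds in two stages: it first applies the reflection $x\mapsto a/2-x$ on the left-hand integral (using the $\pi/2-\theta$ identities) to swap the roles of sine and cosine, and then applies the shift $u=x+a/2$ (using the $\theta-\pi/2$ identities) to move the interval to $[a/2,a]$. The composition of these two maps is precisely your single map $x\mapsto a-x$, so the underlying geometry is the same reflection about $x=a/2$; you simply recognize this in one stroke rather than factoring it through an intermediate step. Your route is therefore shorter and avoids the detour through the swapped integrand $\sin^r\cos^s$, at no cost in rigor.
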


\begin{proof} Using the integral identity $\int_0^A f(x) \, dx = \int_0^A f(A-x) \, dx$ for continuous function $f$ and the subtraction trigonometric identities $\sin(M-N)=\sin M\cos N - \cos M\sin N$ and $\cos(M-N)=\cos M\cos N + \sin M\sin N$ gives \begin{displaymath}
\begin{split}
\int_0^{a/2} \cos^r\left(\frac{\pi x}{a}\right) \sin^s \left( \frac{\pi x}{a}\right) \, dx &= \int_0^{a/2} \cos^r\left(\frac{\pi}{2}-\frac{\pi x}{a}\right)\sin^s\left(\frac{\pi}{2}-\frac{\pi x}{a}\right) \, dx \\
                                                                 &= \int_0^{a/2} \sin^r\left(\frac{\pi x}{a}\right) \cos^s\left(\frac{\pi x}{a}\right) \, dx.
\end{split}
\end{displaymath}Let $u=x+\frac{a}{2}$. Then by using the identities $\sin\left(M-\frac{\pi}{2}\right)=-\cos M$ and $\cos\left(M-\frac{\pi}{2}\right)=\sin M$,  \begin{displaymath}
\begin{split}
\int_0^{a/2} \sin^r\left(\frac{\pi x}{a}\right) \cos^s\left(\frac{\pi x}{a}\right) \, dx &= \int_{a/2}^a \sin^r\left(\frac{\pi u}{a} - \frac{\pi}{2}\right)\cos^s\left(\frac{\pi u}{a} - \frac{\pi}{2} \right) \, du \\
                                                                &=(-1)^r \int_{a/2}^a \cos^r\left(\frac{\pi u}{a}\right)  \sin^s\left(\frac{\pi u}{a}\right)  \, du. \qedhere
\end{split}
\end{displaymath}  \end{proof}

\begin{theorem}\label{thm2} \textup{(\cite{NKWA1})} For $n \in \mathbb{Z}_{\ge 0}$, \begin{align} C_n = \frac{4^n}{(n+1)\pi} \int_{-1}^1 \frac{x^{2n}}{\sqrt{1-x^2}} \; dx. \label{eq2} \end{align} \end{theorem}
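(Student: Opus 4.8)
The plan is to evaluate the integral on the right-hand side directly and show it equals $C_n$. First I would exploit the evenness of the integrand $x^{2n}/\sqrt{1-x^2}$ to write $\int_{-1}^1 = 2\int_0^1$. Then I would make the trigonometric substitution $x = \sin\theta$ (or equivalently $x = \cos\theta$), so that $dx = \cos\theta\, d\theta$ and $\sqrt{1-x^2} = \cos\theta$ on $[0,\pi/2]$, reducing the integral to the Wallis-type integral $\int_0^{\pi/2} \sin^{2n}\theta\, d\theta$. This is a standard integral from calculus, and its value is $\frac{\pi}{2}\cdot\frac{(2n)!}{4^n (n!)^2} = \frac{\pi}{2}\cdot\frac{1}{4^n}\binom{2n}{n}$, which can be established by the usual reduction-formula/integration-by-parts recursion $I_m = \frac{m-1}{m}I_{m-2}$ together with $I_0 = \pi/2$.

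Assembling the pieces, I would get
\begin{displaymath}
\frac{4^n}{(n+1)\pi}\int_{-1}^1 \frac{x^{2n}}{\sqrt{1-x^2}}\, dx
= \frac{4^n}{(n+1)\pi}\cdot 2\cdot \frac{\pi}{2}\cdot\frac{1}{4^n}\binom{2n}{n}
= \frac{1}{n+1}\binom{2n}{n} = C_n,
\end{displaymath}
which is exactly the definition of the Catalan number given in the introduction. The verification that $I_m = \frac{m-1}{m} I_{m-2}$ follows by writing $\sin^m\theta = \sin^{m-1}\theta\sin\theta$ and integrating by parts, with the boundary terms vanishing at $0$ and $\pi/2$; iterating from $m = 2n$ down to $m = 0$ yields the double-factorial quotient, which simplifies to the central binomial coefficient over $4^n$.

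The main obstacle, such as it is, is purely bookkeeping: one must correctly track the reduction-formula recursion for even exponents and simplify the resulting product of odd and even double factorials into the closed form $\binom{2n}{n}/4^n$. There is also a minor point to check that the substitution $x=\sin\theta$ is valid on the closed interval despite $\sqrt{1-x^2}$ vanishing at the endpoints — the integral is an improper but convergent one, and the substitution handles the endpoints cleanly since $\cos\theta$ in the numerator cancels the singularity. Beyond that the argument is entirely within the toolkit of a first course in integral calculus, consistent with the paper's stated aim. (Lemma~\ref{lma1} is not needed for this particular theorem; it will presumably be invoked when deriving the equivalent representations that follow.)
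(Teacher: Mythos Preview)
Your proof is correct and essentially matches the paper's: both link the integral to the Wallis formula $\int_0^{\pi/2}\cos^{2n}\theta\,d\theta = \frac{\pi}{2\cdot 4^n}\binom{2n}{n}$ via the obvious trigonometric substitution. The only wrinkle is that the paper runs the argument in the opposite direction and does in fact invoke Lemma~\ref{lma1} (to pass from $[0,\pi/2]$ to $[0,\pi]$ before substituting $u=\cos x$), though as you observe this use of the lemma is avoidable by appealing to evenness on $[-1,1]$ directly.
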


\begin{proof} We start with the identity \cite{KOSH, WIEN} \begin{displaymath}{2n \choose n} = \frac{2 \cdot 4^n}{\pi} \int_0^{\pi/2} \cos^{2n} x \, dx. \end{displaymath} Using Lemma \ref{lma1} with $r=2n$, $s=0$, and $a=\pi$, \begin{align}{2n\choose n} = \frac{4^n}{\pi} \int_0^{\pi} \cos^{2n}x \, dx. \label{eq3} \end{align} Using the substitution $u=\cos x$, we get $dx=-\frac{1}{\sqrt{1-u^2}} \, du$ and \begin{displaymath}{2n \choose n} = \frac{4^n}{\pi} \int_{-1}^1 \frac{u^{2n}}{\sqrt{1-u^2}} \, du. \end{displaymath} The result follows by multiplying both sides by $\frac{1}{n+1}$. \end{proof}

\begin{theorem}\label{thm3} \textup{(\cite{NKWA1})} For $n \in \mathbb{Z}_{\ge 0}$,\begin{align}C_n = \frac{4^n}{(n+1)\pi} \int_0^1 \frac{x^n}{\sqrt{x-x^2}} \; dx. \label{eq4}\end{align}  \end{theorem}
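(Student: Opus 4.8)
The plan is to derive \eqref{eq4} from Theorem \ref{thm2} by a single change of variables, exactly in the spirit of the ``proof by equivalence'' announced above. Starting from
\[
C_n = \frac{4^n}{(n+1)\pi}\int_{-1}^1 \frac{x^{2n}}{\sqrt{1-x^2}}\,dx,
\]
the first observation is that the integrand is an even function of $x$, so the integral over $[-1,1]$ equals twice the integral over $[0,1]$. Thus it suffices to show that $2\int_0^1 \frac{x^{2n}}{\sqrt{1-x^2}}\,dx = \int_0^1 \frac{t^n}{\sqrt{t-t^2}}\,dt$.

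Next I would apply the substitution $t = x^2$ on $[0,1]$, so that $dt = 2x\,dx$, i.e.\ $dx = \frac{dt}{2\sqrt{t}}$, and $x^{2n} = t^n$, $\sqrt{1-x^2} = \sqrt{1-t}$. This gives
\[
2\int_0^1 \frac{x^{2n}}{\sqrt{1-x^2}}\,dx = 2\int_0^1 \frac{t^n}{\sqrt{1-t}}\cdot\frac{dt}{2\sqrt{t}} = \int_0^1 \frac{t^n}{\sqrt{t}\,\sqrt{1-t}}\,dt = \int_0^1 \frac{t^n}{\sqrt{t-t^2}}\,dt,
\]
where in the last step the radicals are combined using $\sqrt{t}\,\sqrt{1-t} = \sqrt{t(1-t)} = \sqrt{t-t^2}$, valid since $t\in[0,1]$. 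Substituting back into the representation from Theorem \ref{thm2} yields \eqref{eq4}.

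The only point requiring a word of care is that all of the integrals involved are improper: the original integrand blows up at $x = \pm 1$ and the target integrand blows up at both $t=0$ and $t=1$. Since $2n\ge 0$ and $n\ge 0$, in each case the singularity is integrable (comparable to $(1-x^2)^{-1/2}$, respectively $t^{-1/2}$ and $(1-t)^{-1/2}$), so the integrals converge and the substitution $t = x^2$ is legitimate on $(0,1)$, extending to the closed interval by taking limits. I do not expect any genuine obstacle here; the entire content is the bookkeeping of the substitution.
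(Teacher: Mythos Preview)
Your proof is correct and is essentially identical to the paper's: the paper also uses that the integrand is even on $[-1,1]$ and then performs the substitution $t=x^2$ (written there in the $d(x^2)$ notation) to arrive at \eqref{eq4}. Your added remark about the convergence of the improper integrals is a welcome clarification but does not change the underlying argument.
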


\begin{proof} The integrand in (\ref{eq2}) is an even function, so \begin{displaymath}\begin{split}\frac{4^n}{(n+1)\pi}\int_{-1}^1 \frac{x^{2n}}{\sqrt{1-x^2}} \; dx &= \frac{4^n}{(n+1)\pi} \int_0^1 \frac{2x^{2n}}{\sqrt{1-x^2}} \; dx  \\
                                                                                               &= \frac{4^n}{(n+1)\pi}  \int_0^1 \frac{x^{2n}}{x\sqrt{1-x^2}} \; 2x \; dx  \\
                                                                                               &= \frac{4^n}{(n+1)\pi} \int_0^1 \frac{(x^2)^n}{\sqrt{(x^2)-(x^2)^2}} \; d(x^2)  \\
                                                                                               &= \frac{4^n}{(n+1)\pi} \int_0^1 \frac{x^n}{\sqrt{x-x^2}} \; dx. \qedhere \end{split} \end{displaymath}\end{proof}

\begin{theorem}\label{thm4} \textup{(\cite{PENS, FQI})} For $n \in \mathbb{Z}_{\ge 0}$, \begin{align}C_n =\frac{1}{2\pi}\int_0^4 x^n \sqrt{\frac{4-x}{x}} \; dx. \label{eq5} \end{align}  \end{theorem}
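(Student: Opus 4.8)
The plan is to deduce \eqref{eq5} from \eqref{eq4} by a change of variables followed by a single integration by parts. First I would rescale the interval in \eqref{eq4}: the substitution $x = t/4$ gives $dx = \tfrac14\,dt$ and $\sqrt{x-x^2} = \tfrac14\sqrt{t(4-t)}$, so \eqref{eq4} becomes
\[
C_n = \frac{1}{(n+1)\pi}\int_0^4 \frac{t^n}{\sqrt{t(4-t)}}\,dt .
\]
Writing $J_n = \int_0^4 t^n\,\bigl(t(4-t)\bigr)^{-1/2}\,dt$ and $I_n = \int_0^4 x^n\sqrt{(4-x)/x}\,dx$, the theorem now reduces to the single identity $(n+1)I_n = 2J_n$.

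Next I would record two relations between $I_n$ and $J_n$. Clearing the radical, $\sqrt{(4-x)/x} = (4-x)/\sqrt{x(4-x)}$, gives at once the algebraic relation $I_n = 4J_n - J_{n+1}$. The second relation is the one genuinely analytic step: since $\frac{d}{dt}\sqrt{t(4-t)} = (2-t)/\sqrt{t(4-t)}$, I would evaluate $\int_0^4 t^n\,d\!\left[\sqrt{t(4-t)}\right]$ in two ways. Expanding the differential gives $2J_n - J_{n+1}$; integrating by parts gives $\bigl[t^n\sqrt{t(4-t)}\bigr]_0^4 - n\int_0^4 t^{n-1}\sqrt{t(4-t)}\,dt$, where the boundary term vanishes and, since $t^{n-1}\sqrt{t(4-t)} = t^n\sqrt{(4-t)/t}$, the remaining integral equals $I_n$. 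Hence $2J_n - J_{n+1} = -nI_n$.

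Subtracting the two relations eliminates $J_{n+1}$ and leaves $(n+1)I_n = 2J_n$, that is, $\frac{1}{2\pi}I_n = \frac{1}{(n+1)\pi}J_n = C_n$, which is \eqref{eq5}. The argument is uniform in $n$ (for $n=0$ the integration-by-parts integral carries the factor $n=0$, and the identity still follows from $I_0 = 4J_0 - J_1$ together with $2J_0 = J_1$; one may also just check $J_0=\pi$, $I_0=2\pi$ directly). The only real obstacle is the factor $\tfrac{1}{n+1}$ present in \eqref{eq4} but absent from \eqref{eq5}: no substitution alone can account for it, so the single integration by parts — equivalently, the one-step reduction linking $J_n$ and $J_{n+1}$ — is indispensable, while everything else is routine manipulation of the radical.
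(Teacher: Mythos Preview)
Your proof is correct and follows essentially the same route as the paper: rescale \eqref{eq4} to the interval $[0,4]$ and then remove the factor $\tfrac{1}{n+1}$ by a single integration by parts. The only difference is the choice of parts. The paper takes $u=y^{n+1}$ and $dv=\dfrac{dy}{y^2\sqrt{(4-y)/y}}$, noting that $v=-\tfrac12\sqrt{(4-y)/y}$, which yields $(n+1)I_n=2J_n$ in one stroke; you instead integrate $t^n\,d\bigl[\sqrt{t(4-t)}\bigr]$ and combine the resulting relation $2J_n-J_{n+1}=-nI_n$ with the algebraic identity $I_n=4J_n-J_{n+1}$. Both arrive at the same identity, and your version has the mild advantage that the antiderivative $\sqrt{t(4-t)}$ is easier to spot than $-\tfrac12\sqrt{(4-y)/y}$, at the cost of one extra line of algebra.
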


\begin{proof} From (\ref{eq4}), let $y=4x$. Then, we have \begin{displaymath}\begin{split} \frac{4^n}{(n+1)\pi} \int_0^1 \frac{x^n}{\sqrt{x-x^2}} \; dx &= \frac{4^n}{(n+1)\pi} \int_0^4 \frac{\left(\frac{y}{4}\right)^n}{\sqrt{\frac{y}{4}-\frac{y^2}{16}}} \;  \frac{1}{4} \; dy \\
                                                                                         &= \frac{1}{(n+1)\pi} \int_0^4 \frac{y^n}{\sqrt{\frac{4y-y^2}{16}}} \;  \frac{1}{4} \; dy\\
                                                                                         &= \frac{1}{(n+1)\pi} \int_0^4 \frac{y^n}{\sqrt{4y-y^2}} \; dy \\
                                                                                         &= \frac{1}{(n+1)\pi} \int_0^4 \frac{y^{n+1}}{y^2\sqrt{\frac{4-y}{y}}} \; dy. \end{split}\end{displaymath}Using integration by parts with $u=y^{n+1}$ and $dv= \frac{1}{y^2\sqrt{\frac{4-y}{y}}} \, dy$, \begin{displaymath}\begin{split}\frac{1}{(n+1)\pi} \int_0^4 \frac{y^{n+1}}{y^2\sqrt{\frac{4-y}{y}}} \; dy &= \frac{1}{(n+1)\pi}\left(\frac{n+1}{2}\int_0^4 y^n\sqrt{\frac{4-y}{y}} \; dy\right) \\
                                                                                                            &= \frac{1}{2\pi} \int_0^4  y^n\sqrt{\frac{4-y}{y}} \; dy. \qedhere \end{split}\end{displaymath} \end{proof}

\begin{theorem}\label{thm5} \textup{(\cite{FQI})} For $n \in \mathbb{Z}_{\ge 0}$, \begin{align}C_n = \frac{2^{2n+2}}{\pi}\int_0^{\infty} \frac{x^2}{(1+x^2)^{n+2}} \; dx. \label{eq6}\end{align}\end{theorem}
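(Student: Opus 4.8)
The plan is to derive \eqref{eq6} from the previously established representation \eqref{eq5} by a single well-chosen substitution. Starting from
\[
C_n = \frac{1}{2\pi}\int_0^4 t^n \sqrt{\frac{4-t}{t}}\; dt,
\]
I would aim to map the finite interval $[0,4]$ to $[0,\infty)$ in a way that rationalizes the factor $\sqrt{(4-t)/t}$. The natural choice is to set $t = \dfrac{4}{1+x^2}$, so that $4-t = \dfrac{4x^2}{1+x^2}$ and hence $\sqrt{(4-t)/t} = x$ for $x \ge 0$; as $t$ runs from $0$ to $4$, $x$ runs (in reverse) from $\infty$ down to $0$. Then $dt = -\dfrac{8x}{(1+x^2)^2}\,dx$, and $t^n = \dfrac{4^n}{(1+x^2)^n}$.

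Carrying out the substitution, the two sign reversals (the orientation flip of the limits and the minus sign in $dt$) cancel, giving
\[
C_n = \frac{1}{2\pi}\int_0^{\infty} \frac{4^n}{(1+x^2)^n}\cdot x \cdot \frac{8x}{(1+x^2)^2}\; dx
    = \frac{4 \cdot 4^n}{\pi}\int_0^{\infty} \frac{x^2}{(1+x^2)^{n+2}}\; dx.
\]
Since $4\cdot 4^n = 2^{2}\cdot 2^{2n} = 2^{2n+2}$, this is exactly \eqref{eq6}.

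The only points requiring a little care are bookkeeping rather than genuine obstacles: one must track the orientation of the limits correctly (the substitution is decreasing, so reversing the limits supplies the compensating sign for the $-8x/(1+x^2)^2$ in $dt$), and one should note that $\sqrt{(4-t)/t}$ simplifies to $x$ rather than $|x|$ precisely because the new variable is restricted to $x\ge 0$. I would also remark briefly that the improper integral at the upper limit converges (the integrand decays like $x^{-2n-2}$), and that the original integral's singularities at $t=0$ and $t=4$ are integrable, so all manipulations are legitimate. No hard step is anticipated; the main thing is choosing the substitution $t = 4/(1+x^2)$, after which everything is routine algebra.
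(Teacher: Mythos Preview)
Your proposal is correct and is essentially identical to the paper's own proof: the paper also starts from \eqref{eq5} and applies the substitution $x=\dfrac{4}{1+t^2}$ (your $t=\dfrac{4}{1+x^2}$, with the variable names swapped), arriving at \eqref{eq6} by the same algebra. Your added remarks on orientation, the nonnegativity of $x$, and convergence are extra care the paper omits, but the argument is the same.
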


\begin{proof} Starting from (\ref{eq5}), use the reverse substitution $x=\frac{4}{1+t^2}$. Then, \begin{displaymath}\begin{split}\frac{1}{2\pi}\int_0^4 x^n \sqrt{\frac{4-x}{x}} \; dx
                                                                            &= \frac{1}{2\pi} \int_0^{\infty} \left(\frac{4}{1+t^2}\right)^n \frac{8t^2}{(1+t^2)^2} \; dt \\
                                                                            &= \frac{4^n \cdot 8}{2\pi} \int_0^{\infty} \frac{t^2}{(1+t^2)^{n+2}} \; dt \\
                                                                            &=  \frac{2^{2n+2}}{\pi}\int_0^{\infty} \frac{t^2}{(1+t^2)^{n+2}} \; dt. \qedhere \end{split}\end{displaymath}\end{proof}

\begin{theorem} \label{thm6} \textup{(\cite{YUAN})} For $n \in \mathbb{Z}_{\ge 0}$, \begin{align} C_n = \int_0^1 (2\cos (\pi x))^{2n}(2\sin^2 (\pi x)) \; dx. \label{eq7} \end{align}\end{theorem}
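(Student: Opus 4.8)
The plan is to derive (\ref{eq7}) from the already-established representation (\ref{eq6}) by a single trigonometric substitution, followed by an application of Lemma~\ref{lma1} and a linear rescaling of the variable; this keeps the proof inside the ``prove by equivalence'' framework used for Theorems~\ref{thm3}--\ref{thm5}. First I would start from
\begin{displaymath}
C_n = \frac{2^{2n+2}}{\pi}\int_0^{\infty}\frac{x^2}{(1+x^2)^{n+2}}\;dx
\end{displaymath}
and substitute $x=\tan\theta$, so that $dx=\sec^2\theta\,d\theta$, $1+x^2=\sec^2\theta$, and the limits $x=0,\infty$ become $\theta=0,\pi/2$. The integrand collapses, since $\tan^2\theta\,\cos^{2n+2}\theta=\sin^2\theta\,\cos^{2n}\theta$, giving
\begin{displaymath}
C_n=\frac{2^{2n+2}}{\pi}\int_0^{\pi/2}\cos^{2n}\theta\,\sin^2\theta\;d\theta .
\end{displaymath}

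Next I would invoke Lemma~\ref{lma1} with $r=2n$, $s=2$, and $a=\pi$: since $(-1)^{2n}=1$, the lemma gives $\int_0^{\pi/2}\cos^{2n}\theta\sin^2\theta\,d\theta=\int_{\pi/2}^{\pi}\cos^{2n}\theta\sin^2\theta\,d\theta$, so each side equals half of $\int_0^{\pi}\cos^{2n}\theta\sin^2\theta\,d\theta$. Finally the substitution $\theta=\pi x$ turns $\int_0^{\pi}$ into $\pi\int_0^1$, and collecting the constants ($2^{2n+2}=4\cdot 4^n$ against the factor $\tfrac12$ from Lemma~\ref{lma1} and the factor $\pi$ from the rescaling, which cancels the $\pi$ in the denominator) yields $C_n=2\cdot 4^n\int_0^1\cos^{2n}(\pi x)\sin^2(\pi x)\,dx$, which is exactly $\int_0^1(2\cos(\pi x))^{2n}(2\sin^2(\pi x))\,dx$.

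There is no serious obstacle here; the only thing to watch is the bookkeeping of the multiplicative constants, all of which must cancel to leave the clean prefactor $2\cdot 4^n$. Two alternative routes are also available if a self-contained argument is preferred: one can write $\sin^2(\pi x)=1-\cos^2(\pi x)$, expand $\int_0^1(2\cos(\pi x))^{2n}(2\sin^2(\pi x))\,dx=2\cdot 4^n\bigl(\int_0^1\cos^{2n}(\pi x)\,dx-\int_0^1\cos^{2n+2}(\pi x)\,dx\bigr)$, evaluate each piece with (\ref{eq3}) after rescaling $x\mapsto\pi x$, and check that the resulting difference of central-binomial-type terms collapses to $\tfrac{1}{n+1}\binom{2n}{n}$; or one can integrate $\int_0^1\cos^{2n}(\pi x)\sin^2(\pi x)\,dx$ by parts as in the proof of Theorem~\ref{thm4}, taking $u=\sin(\pi x)$ and $dv=\cos^{2n}(\pi x)\sin(\pi x)\,dx$, the boundary term vanishing because $\sin(\pi x)=0$ at both endpoints, and then applying (\ref{eq3}).
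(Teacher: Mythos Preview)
Your proof is correct and essentially the same as the paper's: both start from (\ref{eq6}), pass through a tangent substitution to reach $\int \cos^{2n}\theta\sin^2\theta\,d\theta$, and then use Lemma~\ref{lma1} together with a linear rescaling to land on the interval $[0,1]$ with argument $\pi x$. The only cosmetic difference is ordering---the paper bakes the rescaling into the Weierstrass form $x=\tan(\pi t/2)$ and applies Lemma~\ref{lma1} last, whereas you take $x=\tan\theta$, apply Lemma~\ref{lma1}, and rescale at the end.
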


\begin{proof} From (\ref{eq6}), we use a substitution that is useful for integrating rational functions involving sine and cosine --- the Weierstrass substitution $x=\tan\left(\frac{\pi t}{2}\right)$. By constructing a right triangle with angle $\frac{\pi t}{2}$, we get $\cos\left(\frac{\pi t}{2}\right)  = \frac{1}{\sqrt{1+x^2}}$ and $\sin\left(\frac{\pi t}{2}\right)= \frac{x}{\sqrt{1+x^2}}$. Then \begin{displaymath} \frac{2^{2n+2}}{\pi}\int_0^{\infty} \frac{x^2}{(1+x^2)^{n+2}} \; dx = 2^{2n+1} \int_0^1 \cos^{2n}\left(\textstyle \frac{\pi t}{2}\right)\sin^2\left(\textstyle \frac{\pi t}{2}\right) \; dt. \end{displaymath} Let $u=\frac{t}{2}$. Then, \begin{displaymath}\begin{split}2^{2n+1} \int_0^1 \cos^{2n}\left(\textstyle \frac{\pi t}{2}\right)\sin^2\left(\textstyle \frac{\pi t}{2}\right) \; dt &= 2^{2n+2} \int_0^{1/2} \cos^{2n} (\pi u) \sin^2 (\pi u) \, du \\
                                                                                                                                                                   &= 2^{2n+1} \int_0^1 \cos^{2n} (\pi u) \sin^2 (\pi u) \, du \end{split}\end{displaymath} where the last equality is a consequence of Lemma \ref{lma1}.\end{proof}

\begin{theorem} \label{thm7} \textup{(\cite{DANA, NKWA, FQI})} For $n \in \mathbb{Z}_{\ge 0}$, \begin{align} C_n = \frac{2^{2n+5}}{\pi}\int_0^1\frac{x^2(1-x^2)^{2n}}{(1+x^2)^{2n+3}} \; dx. \label{eq8}\end{align}\end{theorem}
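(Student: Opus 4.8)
The plan is to derive (\ref{eq8}) from the previously established representation (\ref{eq7}), namely $C_n = \int_0^1 (2\cos(\pi x))^{2n}(2\sin^2(\pi x))\,dx$, by means of an explicit trigonometric substitution that converts the integrand into the rational-in-$x$ form on the right-hand side of (\ref{eq8}). The natural candidate, matching the pattern of the other proofs in this section, is a Weierstrass-type substitution $x = \tan\left(\frac{\pi t}{2}\right)$ (or possibly $x = \tan\left(\frac{\pi t}{4}\right)$, to be fixed by checking the limits of integration), under which $\cos^2\left(\frac{\pi t}{2}\right) = \frac{1}{1+x^2}$, $\sin^2\left(\frac{\pi t}{2}\right) = \frac{x^2}{1+x^2}$, and $\cos(\pi t) = \frac{1-x^2}{1+x^2}$, $\sin(\pi t) = \frac{2x}{1+x^2}$.

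First I would rewrite the integrand of (\ref{eq7}) using a double-angle identity so that the argument $\pi x$ becomes $\frac{\pi t}{2}$ after the substitution: write $(2\cos(\pi x))^{2n}(2\sin^2(\pi x))$ in terms of $\cos(2\pi x)$ and $\sin(2\pi x)$, or alternatively first perform a linear change of variable to halve the angle (as was done via $u = t/2$ in the proof of Theorem \ref{thm6}) so that the Weierstrass substitution lands cleanly. Then I would substitute $x = \tan\left(\frac{\pi t}{2}\right)$, compute $dx = \frac{\pi}{2}\sec^2\left(\frac{\pi t}{2}\right)dt = \frac{\pi}{2}\cdot\frac{1}{1+?}$ — more precisely, inverting, $dt = \frac{2}{\pi}\cdot\frac{dx}{1+x^2}$ — and express every trigonometric factor as a rational function of $x$. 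The factor $(2\cos\theta)^{2n}$ with $\cos\theta = \frac{1-x^2}{1+x^2}$ (if the halved angle $\theta$ satisfies $\cos(2\theta)=\cdots$, i.e. using $\cos\theta$ for the doubled argument) produces $\frac{(1-x^2)^{2n}}{(1+x^2)^{2n}}$, the factor $2\sin^2\theta$ produces $\frac{8x^2}{(1+x^2)^2}$, and the Jacobian contributes another power of $(1+x^2)$ in the denominator; collecting exponents should yield $(1+x^2)^{2n+3}$ in the denominator together with the constant $2^{2n+5}/\pi$.

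The main obstacle will be bookkeeping rather than conceptual: getting the half-angle arithmetic exactly right so that the limits transform to $0$ and $1$, the constant comes out as $2^{2n+5}/\pi$ (not off by a power of $2$ or a factor of $\pi$), and the denominator exponent is $2n+3$. In particular I expect to need one preliminary angle-doubling step before the Weierstrass substitution — that is, to handle $\cos^{2n}(\pi x)$ I should write it using the identity that expresses it through $\cos^{2n}$ of a related angle whose tangent is the new variable, or equivalently note that if $x = \tan(\pi t/4)$ then $\cos(\pi t/2) = \frac{1-x^2}{1+x^2}$ and the exponent pattern $(1-x^2)^{2n}/(1+x^2)^{2n+3}$ emerges directly. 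I would settle this detail by a small test at $n=0$ (both sides must equal $C_0 = 1$), which also serves as a sanity check on the constant. Once the substitution is pinned down, the remainder is a routine simplification of powers of $(1+x^2)$ and collection of the constant, with no integration by parts or special-function input required.
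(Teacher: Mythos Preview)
Your plan is correct and matches the paper's proof essentially line for line. The paper resolves your one point of uncertainty by first invoking Lemma~\ref{lma1} to rewrite (\ref{eq7}) as $2^{2n+2}\int_0^{1/2}\cos^{2n}(\pi x)\sin^2(\pi x)\,dx$ (this is the ``halve the interval'' step you anticipated), and then applies the Weierstrass substitution $u=\tan(\pi x/2)$, which sends $[0,1/2]$ to $[0,1]$ and produces exactly the exponent $2n+3$ and constant $2^{2n+5}/\pi$ you predicted.
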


\begin{proof} Use Lemma \ref{lma1} to change (\ref{eq7}) back to \begin{displaymath}2^{2n+2}\int_0^{1/2} \cos^{2n} (\pi x) \sin^2 (\pi x) \, dx.\end{displaymath} Let $u=\tan\left(\frac{\pi x}{2}\right)$. Then, $\cos (\pi x) = \frac{1-u^2}{1+u^2}$, $\sin (\pi x) = \frac{2u}{1+u^2}$, $dx= \frac{2}{\pi} \frac{du}{1+u^2}$, and \begin{displaymath}\begin{split}2^{2n+2}\int_0^{1/2} \cos^{2n} (\pi x) \sin^2 (\pi x) \, dx &= \frac{2^{2n+3}}{\pi} \int_0^1 \left(\frac{1-u^2}{1+u^2}\right)^{2n} \left(\frac{2u}{1+u^2}\right)^2  \frac{1}{1+u^2} \, du \\
                                                                                  &= \frac{2^{2n+5}}{\pi} \int_0^1 \frac{u^2(1-u^2)^{2n}}{(1+u^2)^{2n+3}} \, du. \qedhere \end{split}\end{displaymath}\end{proof}

\begin{theorem} \label{thm8}  \textup{(\cite{DANA, FQI})} For $n \in \mathbb{Z}_{\ge 0}$, \begin{align}C_n= \frac{2^{2n+1}}{\pi}\int_{-1}^1 x^{2n}\sqrt{1-x^2} \; dx. \label{eq9} \end{align} \end{theorem}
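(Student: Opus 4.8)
The plan is to derive (\ref{eq9}) from (\ref{eq2}) by a single integration by parts. Since $4^n = 2^{2n}$, identity (\ref{eq9}) is equivalent to the statement that
\begin{displaymath} \int_{-1}^1 \frac{x^{2n}}{\sqrt{1-x^2}} \; dx = 2(n+1) \int_{-1}^1 x^{2n}\sqrt{1-x^2} \; dx, \end{displaymath}
so it suffices to establish this relation between the two integrals and then substitute it into (\ref{eq2}).

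First I would integrate $\int_{-1}^1 x^{2n}\sqrt{1-x^2}\,dx$ by parts, taking $u=\sqrt{1-x^2}$ and $dv = x^{2n}\,dx$, so that $v = \frac{x^{2n+1}}{2n+1}$ and $du = \frac{-x}{\sqrt{1-x^2}}\,dx$. The boundary term $\left[\frac{x^{2n+1}\sqrt{1-x^2}}{2n+1}\right]_{-1}^1$ vanishes, leaving $\int_{-1}^1 x^{2n}\sqrt{1-x^2}\,dx = \frac{1}{2n+1}\int_{-1}^1 \frac{x^{2n+2}}{\sqrt{1-x^2}}\,dx$. Next I would split the numerator via the algebraic identity $x^{2n+2} = x^{2n} - x^{2n}(1-x^2)$, which rewrites the right-hand integral as $\int_{-1}^1 \frac{x^{2n}}{\sqrt{1-x^2}}\,dx - \int_{-1}^1 x^{2n}\sqrt{1-x^2}\,dx$. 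Writing $I = \int_{-1}^1 \frac{x^{2n}}{\sqrt{1-x^2}}\,dx$ and $J = \int_{-1}^1 x^{2n}\sqrt{1-x^2}\,dx$, these steps give $(2n+1)J = I - J$, hence $I = 2(n+1)J$, which is precisely the needed relation.

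The computation is entirely routine; the only point deserving a word of care is the behavior of the integrands at $x=\pm 1$. Both $\frac{x^{2n}}{\sqrt{1-x^2}}$ and $x^{2n}\sqrt{1-x^2}$ are (improperly) integrable on $[-1,1]$, and the product $\frac{x^{2n+1}\sqrt{1-x^2}}{2n+1}$ is continuous on $[-1,1]$ and zero at the endpoints, so the integration by parts is legitimate. As an alternative route one could instead start from (\ref{eq7}): fold the integral to $[0,\tfrac12]$ using Lemma \ref{lma1}, apply the substitution $u=\cos(\pi x)$ to obtain $\frac{2^{2n+2}}{\pi}\int_0^1 u^{2n}\sqrt{1-u^2}\,du$, and conclude since the integrand is even. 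I expect the integration-by-parts argument from (\ref{eq2}) to be the shortest of the two.
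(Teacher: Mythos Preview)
Your argument is correct. The integration by parts with $u=\sqrt{1-x^2}$ and $dv=x^{2n}\,dx$ is valid, the boundary term vanishes, and the algebraic splitting $x^{2n+2}=x^{2n}-x^{2n}(1-x^2)$ gives exactly $(2n+1)J=I-J$, hence $I=2(n+1)J$; plugging this into (\ref{eq2}) yields (\ref{eq9}).

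This is, however, a genuinely different route from the paper's. The paper builds the representations in a chain, each one from its immediate predecessor, so (\ref{eq9}) is obtained from (\ref{eq8}) via the substitution $x=\sqrt{(1-u)/(1+u)}$ followed by some algebraic simplification. Your approach short-circuits the chain, jumping straight from (\ref{eq2}) to (\ref{eq9}) with one integration by parts; it is noticeably shorter and avoids the rational-function substitution and the intermediate forms (\ref{eq4})--(\ref{eq8}). The alternative you sketch from (\ref{eq7}) (fold via Lemma~\ref{lma1}, substitute $u=\cos(\pi x)$, use evenness) is also valid and different from the paper's method. What the paper's chain buys is a uniform narrative linking all eight representations in sequence; what your argument buys is directness and brevity for this particular identity.
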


\begin{proof} From (\ref{eq8}), use the reverse substitution $x=\sqrt{\frac{1-u}{1+u}}$ to get \begin{displaymath}\begin{split}\frac{2^{2n+5}}{\pi}\int_0^1\frac{x^2(1-x^2)^{2n}}{(1+x^2)^{2n+3}} \; dx &= \frac{2^{2n+5}}{\pi} \int_1^0 \frac{\frac{1-u}{1+u}\left(1-\frac{1-u}{1+u}\right)^{2n}}{\left(1+\frac{1-u}{1+u}\right)^{2n+3}} \frac{-1}{(1+u)\sqrt{1-u^2}} \, du \\
&= \frac{2^{2n+5}}{\pi} \int_0^1 \frac{\frac{1-u}{1+u}\left(\frac{2u}{1+u}\right)^{2n}}{\left(\frac{2}{1+u}\right)^{2n+3}(1+u)\sqrt{1-u^2}} \, du \\
&= \frac{2^{2n+2}}{\pi} \int_0^1 \frac{u^{2n}(1-u)(1+u)}{\sqrt{1-u^2}} \, du \\
&= \frac{2^{2n+2}}{\pi} \int_0^1 u^{2n} \sqrt{1-u^2} \, du. \end{split}\end{displaymath} Since the integrand is an even function, the result follows. \end{proof}

\begin{remark-non} This integral representation can be proved using the beta and gamma functions and related identities \cite{FQI}. \end{remark-non} 

\begin{theorem} \label{thm9} \textup{(\cite{FQI})} For $n \in \mathbb{Z}_{\ge 0}$, \begin{align} C_n = \frac{1}{2\pi} \int_{-2}^2 x^{2n}\sqrt{4-x^2} \, dx. \label{eq10} \end{align}\end{theorem}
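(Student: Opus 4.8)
The plan is to deduce this representation from Theorem \ref{thm8} by a single rescaling substitution, exactly in the spirit of the chain of equivalences used throughout Section 2. Equation (\ref{eq9}) already expresses $C_n$ as a constant times $\int_{-1}^1 x^{2n}\sqrt{1-x^2}\,dx$, and (\ref{eq10}) has the same shape but with the interval $[-1,1]$ stretched to $[-2,2]$ and $1-x^2$ replaced by $4-x^2$; this is precisely what the linear change of variables $x \mapsto x/2$ accomplishes.

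First I would start from (\ref{eq9}) and substitute $x = \tfrac{u}{2}$, so that $dx = \tfrac{1}{2}\,du$ and the limits $x=\pm 1$ become $u=\pm 2$. Then
\[
\int_{-1}^1 x^{2n}\sqrt{1-x^2}\,dx
= \int_{-2}^2 \left(\frac{u}{2}\right)^{2n}\sqrt{1-\frac{u^2}{4}}\cdot\frac{1}{2}\,du
= \frac{1}{2^{2n+1}}\int_{-2}^2 u^{2n}\,\frac{\sqrt{4-u^2}}{2}\,du
= \frac{1}{2^{2n+2}}\int_{-2}^2 u^{2n}\sqrt{4-u^2}\,du .
\]
Multiplying through by the constant $\frac{2^{2n+1}}{\pi}$ from (\ref{eq9}) gives
\[
C_n = \frac{2^{2n+1}}{\pi}\cdot\frac{1}{2^{2n+2}}\int_{-2}^2 u^{2n}\sqrt{4-u^2}\,du
= \frac{1}{2\pi}\int_{-2}^2 u^{2n}\sqrt{4-u^2}\,du,
\]
which is (\ref{eq10}) after renaming $u$ as $x$.

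There is essentially no obstacle here: the only things to check are that the powers of $2$ cancel correctly (the factor $2^{-2n}$ from $(u/2)^{2n}$, one factor of $\tfrac12$ from $du$, and one factor of $\tfrac12$ from pulling $\sqrt{4-u^2}/2$ out of the square root, combining to $2^{-2n-2}$) and that the even integrand makes the symmetric limits unproblematic. So the bulk of the write-up is just the displayed computation above; the "hard part," such as it is, is merely bookkeeping the constants.
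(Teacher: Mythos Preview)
Your proof is correct and essentially identical to the paper's own argument: both start from (\ref{eq9}) and apply the linear substitution $u=2x$ (equivalently $x=u/2$), with the same bookkeeping of the powers of $2$ leading to the factor $\tfrac{1}{2\pi}$.
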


\begin{proof} From (\ref{eq9}), let $u=2x$. Then \begin{displaymath}\begin{split}  \frac{2^{2n+1}}{\pi}\int_{-1}^1 x^{2n}\sqrt{1-x^2} \; dx &= \frac{2^{2n+1}}{\pi} \int_{-2}^2 \left(\frac{u}{2}\right)^{2n}\sqrt{1-\left(\frac{u}{2}\right)^2} \, \frac{du}{2} \\ 
&= \frac{1}{\pi} \int_{-2}^2 u^{2n}\sqrt{1-\frac{u^2}{4}} \, du \\
&= \frac{1}{2\pi} \int_{-2}^2 u^{2n}\sqrt{4-u^2} \, du. \qedhere \end{split}\end{displaymath}\end{proof}

\section{Integral representations of $M_n$}

The next two theorems will provide a link between the representations (\ref{eq2}) and (\ref{eq4}) - (\ref{eq10}) and Corollaries \ref{cor12} and \ref{cor13}.

\begin{theorem} \label{thm10} Let $n \in \mathbb{Z}_{\ge 0}$ and $a,b \in \mathbb{R} \cup \{\pm \infty\}$. Suppose $C_n$ can be written in the form \begin{displaymath} C_n = \int_a^b \bigl(f(x)\bigr)^{2n}g(x) \, dx\end{displaymath} for integrable functions $f$ and $g$. Then \begin{align}M_n= \frac{1}{2} \int_a^b \bigl((1+f(x))^n+(1-f(x))^n\bigr) g(x) \, dx.  \label{eq11}\end{align}\end{theorem}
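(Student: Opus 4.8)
The plan is to work backwards from the right-hand side of (\ref{eq11}), reduce the integrand to a polynomial in $f(x)$ via the binomial theorem, and then integrate term by term using the hypothesis applied at each index $k$.

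First I would observe that the hypothesis $C_n = \int_a^b (f(x))^{2n} g(x)\,dx$ is assumed to hold with a fixed pair $f,g$ for every $n \in \mathbb{Z}_{\ge 0}$; in particular $C_k = \int_a^b (f(x))^{2k} g(x)\,dx$ for each $k$ with $0 \le k \le \lfloor n/2 \rfloor$. Next, expanding by the binomial theorem,
\begin{displaymath}
(1+f(x))^n + (1-f(x))^n = \sum_{j=0}^{n} \binom{n}{j} f(x)^j + \sum_{j=0}^{n} \binom{n}{j} (-1)^j f(x)^j = 2\sum_{\substack{0 \le j \le n \\ j \text{ even}}} \binom{n}{j} f(x)^j,
\end{displaymath}
since the terms with $j$ odd cancel in pairs. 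Writing $j = 2k$, this says $\tfrac12\bigl((1+f(x))^n + (1-f(x))^n\bigr) = \sum_{k=0}^{\lfloor n/2\rfloor} \binom{n}{2k} f(x)^{2k}$.

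Then I would multiply by $g(x)$, integrate over $[a,b]$, and use linearity of the integral (the sum is finite, so no convergence issue arises) to get
\begin{displaymath}
\frac{1}{2}\int_a^b \bigl((1+f(x))^n + (1-f(x))^n\bigr) g(x)\,dx = \sum_{k=0}^{\lfloor n/2\rfloor} \binom{n}{2k} \int_a^b f(x)^{2k} g(x)\,dx = \sum_{k=0}^{\lfloor n/2\rfloor} \binom{n}{2k} C_k,
\end{displaymath}
which is exactly $M_n$ by the defining identity (\ref{eq1}).

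I do not expect a genuine obstacle here: the only point requiring any care is the interchange of summation and integration, and that is immediate because the sum over $k$ has only finitely many terms, each of which is integrable by hypothesis. The argument is purely the binomial theorem plus linearity, consistent with the paper's stated goal of using only standard integral-calculus techniques.
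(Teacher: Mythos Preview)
Your proof is correct and is essentially the same as the paper's: both use the binomial expansion to identify $\tfrac12\bigl((1+f(x))^n+(1-f(x))^n\bigr)=\sum_{k=0}^{\lfloor n/2\rfloor}\binom{n}{2k}f(x)^{2k}$ and then the hypothesis together with the definition $M_n=\sum_{k}\binom{n}{2k}C_k$. The only cosmetic difference is direction---the paper starts from $M_n$ and arrives at the integral, while you start from the integral and arrive at $M_n$.
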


\begin{proof}
Using (\ref{eq1}) and the Binomial Theorem, we have \begin{displaymath}\begin{split}
M_n &= \int_a^b \sum_{k=0}^{\left\lfloor n/2 \right\rfloor} {n\choose 2k} \bigl(f(x)\bigr)^{2k}g(x) \, dx \\
        &= \frac{1}{2}\int_a^b \left(\sum_{k=0}^{n} {n\choose k}(f(x))^k + \sum_{k=0}^n {n\choose k}(-1)^k(f(x))^k\right)g(x) \, dx \\
        &= \frac{1}{2} \int_a^b \bigl((1+f(x))^n+(1-f(x))^n\bigr) g(x) \, dx. \qedhere
\end{split}\end{displaymath} 
\end{proof}

\begin{theorem}\label{thm11}Let $n \in \mathbb{Z}_{\ge 0}$ and $a,b \in \mathbb{R} \cup \{\pm \infty\}$. Suppose $C_n$ can be written in the form \begin{displaymath} C_n = \frac{1}{n+1}\int_a^b \bigl(f(x)\bigr)^{2n}g(x) \, dx\end{displaymath} for integrable functions $f$ and $g$. Then \begin{align}M_n= \int_a^b \frac{\varphi_{n+2}(x)-\varphi_{n+1}(x)}{(f(x))^2}g(x) \, dx,  \label{eq12}\end{align}
where \begin{displaymath}\varphi_n(x) = \frac{1}{n}\bigl((1+f(x))^n+(1-f(x))^n-2\bigr)\end{displaymath}\end{theorem}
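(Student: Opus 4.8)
The plan is to start, as in Theorem~\ref{thm10}, from the hypothesis
\[
C_n = \frac{1}{n+1}\int_a^b \bigl(f(x)\bigr)^{2n}g(x)\,dx,
\]
and to compute $M_n$ from the defining relation \eqref{eq1} by interchanging the sum and the integral. The key observation is that $C_k = \frac{1}{k+1}\int_a^b (f(x))^{2k} g(x)\,dx$ carries a factor $\frac{1}{k+1}$, so after substituting into \eqref{eq1} one gets, for each fixed $x$, the inner sum
\[
S(x) \;=\; \sum_{k=0}^{\lfloor n/2\rfloor} \binom{n}{2k} \frac{(f(x))^{2k}}{k+1}.
\]
The whole proof reduces to identifying $S(x)$ in closed form as $\dfrac{\varphi_{n+2}(x)-\varphi_{n+1}(x)}{(f(x))^2}$, after which the result follows by integrating against $g(x)$.

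To evaluate $S(x)$, write $t = f(x)$ and use the same even/odd splitting trick as in Theorem~\ref{thm10}: summing only over even indices $2k$ corresponds to averaging the full binomial expansions at $+t$ and $-t$. Concretely, I would first handle the $\frac{1}{k+1}$ by recognizing $\frac{1}{k+1}\binom{n}{2k}$ through the integral identity $\frac{1}{k+1} = \int_0^1 u^{2k}\,du$ (or directly by the antiderivative manipulation the paper favors), turning $S(x)$ into $\int_0^1 \sum_{k} \binom{n}{2k} (tu)^{2k}\,du = \int_0^1 \frac{(1+tu)^n + (1-tu)^n}{2}\,du$. Then integrate in $u$: antiderivatives of $(1\pm tu)^n$ are $\pm\frac{(1\pm tu)^{n+1}}{t(n+1)}$, evaluated from $0$ to $1$, which produces terms of the shape $\frac{(1+t)^{n+1}+(1-t)^{n+1}-2}{2t(n+1)} = \frac{\varphi_{n+1}(x)}{2t}$. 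This is one power of $t$ short of what we want; dividing by $(f(x))^2 = t^2$ and the appearance of $\varphi_{n+2}-\varphi_{n+1}$ in the claimed formula signals that one more telescoping identity is needed — most likely the algebraic fact that $\frac{\varphi_{n+2}(x)-\varphi_{n+1}(x)}{t^2}$ equals exactly the average-of-binomials expression once one clears denominators and cancels, which can be checked by expanding $\varphi_{n+2}$ and $\varphi_{n+1}$ via the binomial theorem and matching coefficients of $t^{2k}$.

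Thus the main steps, in order, are: (1) substitute the Catalan integral (with the $\frac{1}{k+1}$ weight) into \eqref{eq1} and interchange sum and integral, justified by integrability of $f,g$ on $[a,b]$ and finiteness of the sum; (2) reduce the inner sum $S(x)$ to a single-variable identity in $t=f(x)$; (3) evaluate $S(x)$ in closed form, introducing the functions $\varphi_n$; (4) recombine to get \eqref{eq12}. The one technical point worth care is step (3): verifying that $S(x) = \dfrac{\varphi_{n+2}(x)-\varphi_{n+1}(x)}{t^2}$, which I expect to be the main obstacle, since it requires tracking the $\frac{1}{k+1}$ factor correctly through the binomial splitting — in particular noting $\binom{n}{2k}\frac{1}{k+1}$ is \emph{not} a single binomial coefficient, unlike the cleaner situation $\frac{1}{k+1}\binom{2k}{k}=C_k$, so the telescoping must be done directly on the $\varphi$'s rather than by a slick combinatorial identity. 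Once that algebraic identity is confirmed (by expanding both sides as polynomials in $t$ and comparing the coefficient of $t^{2k}$, which on the $\varphi$ side is $\frac{1}{n+2}\binom{n+2}{2k+2}-\frac{1}{n+1}\binom{n+1}{2k+2}$ shifted down by two degrees, and should simplify to $\frac{1}{k+1}\binom{n}{2k}$), the theorem follows immediately.
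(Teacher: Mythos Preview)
Your overall strategy matches the paper's: substitute the Catalan integral into \eqref{eq1}, swap sum and integral, and deal with the extra $\tfrac{1}{k+1}$ by introducing a second integration variable. However, the integral identity you write down is wrong: $\int_0^1 u^{2k}\,du = \tfrac{1}{2k+1}$, not $\tfrac{1}{k+1}$. This is why your antiderivative step lands on $\varphi_{n+1}/(2t)$ and is ``one power of $t$ short''---the discrepancy is not a sign that a further telescoping trick is needed, it is a symptom of the wrong auxiliary integral. The paper uses the correct identity $\tfrac{1}{k+1}=\int_0^1 y^k\,dy$, groups $(f(x))^{2k}y^k=(f(x)\sqrt{y})^{2k}$, applies the even/odd binomial split to get $(1+f(x)\sqrt{y})^n+(1-f(x)\sqrt{y})^n$, and then substitutes $u=1+f(x)\sqrt{y}$, $v=1-f(x)\sqrt{y}$. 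Since $dy=\tfrac{2(u-1)}{f(x)^2}\,du$ (and similarly for $v$), the inner integral becomes $\tfrac{1}{f(x)^2}\int_1^{1\pm f(x)}(w^{n+1}-w^n)\,dw$, and evaluating gives exactly $\varphi_{n+2}-\varphi_{n+1}$ over $f(x)^2$ with no leftover telescoping needed.

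Your fallback---verifying directly that the coefficient of $t^{2k}$ in $(\varphi_{n+2}-\varphi_{n+1})/t^2$ equals $\tfrac{1}{k+1}\binom{n}{2k}$---is a legitimate alternative route and does work, but note a missing factor of $2$ in what you wrote: since $(1+t)^m+(1-t)^m=2\sum_j\binom{m}{2j}t^{2j}$, the coefficient on the $\varphi$ side is $\tfrac{2}{n+2}\binom{n+2}{2k+2}-\tfrac{2}{n+1}\binom{n+1}{2k+2}$, and it is this expression (with the $2$'s) that simplifies to $\tfrac{1}{k+1}\binom{n}{2k}$. With that correction your coefficient-matching argument is complete and gives a slightly different, purely algebraic finish in place of the paper's second substitution.
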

 \begin{proof} We use the same techniques as the proof of Theorem \ref{thm10}; although, we need to initially convert $M_n$ to a double integral. From (\ref{eq1}), we have
 \begin{displaymath}\begin{split}
M_n &= \int_a^b \sum_{k=0}^{\left\lfloor n/2 \right\rfloor} {n\choose 2k} \frac{1}{k+1} \bigl(f(x)\bigr)^{2k}g(x) \, dx \\
        &= \int_a^b \int_0^1 \sum_{k=0}^{\left\lfloor n/2 \right\rfloor} {n\choose 2k} (f(x))^{2k}y^k g(x) \, dy \, dx \end{split}\end{displaymath}
\begin{displaymath}\begin{split}
        &= \int_a^b \int_0^1 \sum_{k=0}^{\left\lfloor n/2 \right\rfloor}  {n\choose 2k}  (f(x)\sqrt{y})^{2k}g(x) \, dy \, dx \\
        &= \frac{1}{2} \int_a^b g(x) \int_0^1 \left(\sum_{k=0}^n {n\choose k} (f(x)\sqrt{y})^k + \sum_{k=0}^n {n\choose k}(-1)^k (f(x)\sqrt{y})^k\right) \, dy \, dx \\
        &= \frac{1}{2} \int_a^b g(x) \int_0^1 \bigl((1+f(x)\sqrt{y})^n+(1-f(x)\sqrt{y})^n\bigr) \, dy \, dx \\
        &= \int_a^b g(x) \frac{1}{(f(x))^2} \left(\int_1^{1+f(x)}(u^{n+1}-u^n) \, du + \int_1^{1-f(x)} (v^{n+1}-v^n) \, dv\right) \, dx,
\end{split}\end{displaymath} where $u=1+f(x)\sqrt{y}$ and $v=1-f(x)\sqrt{y}$. The result follows by evaluating the inner integrals and simplifying. \end{proof}

\begin{corollary} \label{cor12} For $n \in \mathbb{Z}_{\ge 0}$, \\

(a) $\displaystyle M_n = \frac{1}{4\pi} \int_0^4 \left(\bigl(1+\sqrt{x}\bigr)^n+\bigl(1-\sqrt{x}\bigr)^n\right) \sqrt{\frac{4-x}{x}} \, dx$, \\

(b) $\displaystyle M_n = \frac{2}{\pi} \int_0^{\infty} \left(\left(1+\frac{2}{\sqrt{1+x^2}}\right)^n+\left(1-\frac{2}{\sqrt{1+x^2}}\right)^n\right) \frac{x^2}{(1+x^2)^2} \, dx$, \\

(c) $\displaystyle M_n = \int_0^1 \left(\bigl(1+2\cos \pi x\bigr)^n+\bigl(1-2\cos \pi x\bigr)^n\right) \sin^2 \pi x \, dx$, \\

(d) $\displaystyle M_n = \frac{16}{\pi}\int_0^1 \frac{x^2\bigl((3-x^2)^n+(3x^2-1)^n\bigr)}{(1+x^2)^{n+3}} \, dx$, \\

(e) $\displaystyle M_n = \frac{2}{\pi}\int_{-1}^1 (1+2x)^n\sqrt{1-x^2} \, dx$, \\

(f) $\displaystyle M_n = \frac{1}{2\pi} \int_{-2}^2 (1+x)^n \sqrt{4-x^2} \, dx.$\end{corollary}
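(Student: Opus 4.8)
The plan is to obtain all six identities by specializing Theorem \ref{thm10} to the integral representations of $C_n$ proved in Section 2. Since the hypothesis of Theorem \ref{thm10} requires $C_n$ in the form $\int_a^b\bigl(f(x)\bigr)^{2n}g(x)\,dx$, for each of (\ref{eq5})--(\ref{eq10}) I would first rewrite the integrand so as to exhibit a perfect $2n$-th power, read off $f$ and $g$, and then substitute into (\ref{eq11}); the integrability of $f$ and $g$ is inherited from the corresponding representation of $C_n$.

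Two cases are essentially immediate. For (c), representation (\ref{eq7}) already has the required shape with $f(x)=2\cos\pi x$ and $g(x)=2\sin^2\pi x$, and (\ref{eq11}) gives (c) after the $\tfrac12$ cancels the $2$ in $g$. For (f), (\ref{eq10}) gives $f(x)=x$ and $g(x)=\tfrac1{2\pi}\sqrt{4-x^2}$, so Theorem \ref{thm10} yields $M_n=\tfrac1{4\pi}\int_{-2}^2\bigl((1+x)^n+(1-x)^n\bigr)\sqrt{4-x^2}\,dx$; because $\sqrt{4-x^2}$ is even, the substitution $x\mapsto-x$ shows $\int_{-2}^2(1-x)^n\sqrt{4-x^2}\,dx=\int_{-2}^2(1+x)^n\sqrt{4-x^2}\,dx$, and the two summands combine into the single term of (f). The same even-weight argument, applied to (\ref{eq9}) after writing $2^{2n+1}x^{2n}=2\,(2x)^{2n}$ (so that $f(x)=2x$ and $g(x)=\tfrac2\pi\sqrt{1-x^2}$), produces (e).

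For the remaining three I would absorb stray factors of $4^n$ and of $1+x^2$ into $f$. In (\ref{eq5}) the integrand equals $\bigl(\sqrt x\bigr)^{2n}\cdot\tfrac1{2\pi}\sqrt{(4-x)/x}$, so $f(x)=\sqrt x$ and (\ref{eq11}) gives (a). In (\ref{eq6}) I would write $2^{2n+2}/(1+x^2)^{n+2}=4\bigl(2/\sqrt{1+x^2}\bigr)^{2n}/(1+x^2)^2$, so that $f(x)=2/\sqrt{1+x^2}$ and $g(x)=4x^2/\bigl(\pi(1+x^2)^2\bigr)$, which yields (b). In (\ref{eq8}) I would write $2^{2n+5}(1-x^2)^{2n}/(1+x^2)^{2n+3}=32\bigl(2(1-x^2)/(1+x^2)\bigr)^{2n}/(1+x^2)^3$, so $f(x)=2(1-x^2)/(1+x^2)$; then $1+f(x)=(3-x^2)/(1+x^2)$ and $1-f(x)=(3x^2-1)/(1+x^2)$, and substituting into (\ref{eq11}) and merging the denominator factors $(1+x^2)^n$ and $(1+x^2)^3$ gives (d).

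None of this presents a real obstacle; the manipulations are routine once Section 2 is available. The only point requiring a little care is the bookkeeping in each rewriting: recognizing that factors of $4^n$ belong inside $f$ (as $(2x)^{2n}$, $(2\sqrt x)^{2n}$, and so on), that in (b) and (d) the exponent on $1+x^2$ must be split so that precisely the $2n$-th-power part migrates into $f$ while the remainder stays in $g$, and that in (e) and (f) it is the evenness of the weight function that lets the $(1-f(x))^n$ summand be folded into a doubled $(1+f(x))^n$ summand to reach the stated one-term formulas.
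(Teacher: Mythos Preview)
Your proposal is correct and follows essentially the same route as the paper: each part is obtained by applying Theorem~\ref{thm10} (formula~(\ref{eq11})) to the corresponding Catalan representation among (\ref{eq5})--(\ref{eq10}), with the identical choices of $f$ and $g$ you list, and with the same $x\mapsto -x$ symmetry used to collapse the two-term expressions in (e) and (f) to a single term. Your explicit remarks on how to absorb the $4^n$ factors and split the powers of $1+x^2$ make the identification of $f$ and $g$ slightly more transparent than the paper's terse ``use (\ref{eq11}) with $f(x)=\ldots$, $g(x)=\ldots$'', but the arguments coincide.
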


\begin{remark-non} The integral representations in Corollary \ref{cor12}(e) and (f) are presented by Peter Lushny and Paul Barry, respectively, in OEIS \cite{SP1}. \end{remark-non} 

\begin{proof} (a) Use (\ref{thm11}) on (\ref{eq5}) with $f(x)=\sqrt{x}$ and $g(x)=\frac{1}{2\pi}\sqrt{\frac{4-x}{x}}$. \\

(b) Use (\ref{thm11}) on (\ref{eq6}) with $f(x)=\frac{2}{\sqrt{1+x^2}}$ and $g(x)=\frac{4x^2}{\pi(1+x^2)^2}$. \\

(c) Use (\ref{thm11}) on (\ref{eq7}) with $f(x)=2\cos \pi x$ and $g(x)=2\sin^2\pi x$. \\

(d) Using (\ref{thm11}) on (\ref{eq8}) with $f(x)=\frac{2(1-x^2)}{1+x^2}$ and $g(x)=\frac{32x^2}{\pi(1+x^2)^3}$, we get \begin{displaymath}\begin{split}
M_n &= \frac{16}{\pi}\int_0^1 \left(\left(1+\frac{2-2x^2}{1+x^2}\right)^n + \left(1-\frac{2-2x^2}{1+x^2}\right)^n\right) \frac{x^2}{(1+x^2)^3} \, dx \\
        &= \frac{16}{\pi} \int_0^1 \left(\left(\frac{3-x^2}{1+x^2}\right)^n+\left(\frac{3x^2-1}{1+x^2}\right)^n\right) \frac{x^2}{(1+x^2)^3} \, dx \\
        &=  M_n = \frac{16}{\pi}\int_0^1 \frac{x^2\bigl((3-x^2)^n+(3x^2-1)^n\bigr)}{(1+x^2)^{n+3}} \, dx.
\end{split}\end{displaymath}

(e) Using (\ref{thm11}) on (\ref{eq9}) with $f(x)=2x$ and $g(x)=\frac{2}{\pi}\sqrt{1-x^2}$, we get \begin{displaymath}\begin{split}
M_n &= \frac{1}{\pi}\int_{-1}^1\bigl((1+2x)^n+(1-2x)^n\bigr) \sqrt{1-x^2} \, dx \\
        &= \frac{1}{\pi}\left(\int_{-1}^1(1+2x)^n\sqrt{1-x^2} \, dx + \int_{-1}^1(1-2x)^n\sqrt{1-x^2} \, dx\right).
\end{split}\end{displaymath} Perform the substitution $u=-x$ on the second integral to get the desired result. \\

(f) Using (\ref{thm11}) on (\ref{eq10}) with $f(x)=x$ and $g(x)=\frac{1}{2\pi}\sqrt{4-x^2}$, we get \begin{displaymath}\begin{split}
M_n &= \frac{1}{4\pi}\int_{-2}^2\bigl((1+x)^n+(1-x)^n\bigr) \sqrt{4-x^2} \, dx \\
        &= \frac{1}{4\pi}\left(\int_{-2}^2(1+x)^n\sqrt{4-x^2} \, dx + \int_{-2}^2(1-x)^n\sqrt{4-x^2} \, dx\right).
\end{split}\end{displaymath}Perform the substitution $u=-x$ on the second integral to get the desired result. \\

\emph{Alternative proof of Corollary \ref{cor12}(f):} From the representation in (e), perform the substitution $u=2x$ to get \begin{displaymath}\begin{split}
\frac{1}{\pi}\int_{-2}^2(1+u)^n\sqrt{1-\frac{u^2}{4}} \, du &= \frac{1}{\pi} \int_{-2}^2 (1+u)^n \sqrt{\frac{4-u^2}{4}} \, du \\
                                                                                       &=  \frac{1}{2\pi} \int_{-2}^2 (1+u)^n \sqrt{4-u^2} \, du. \qedhere
\end{split}\end{displaymath}\end{proof}

\begin{corollary}\label{cor13} For $n \in \mathbb{Z}_{\ge 0}$, \\

(a) $\displaystyle M_n = \frac{1}{4\pi}\int_0^1 \frac{\phi_{n+2}(x)-\phi_{n+1}(x)}{x\sqrt{x-x^2}} \, dx$, \\
 
(b) $\displaystyle M_n = \frac{1}{2\pi}\int_{-1}^1 \frac{\psi_{n+2}(x)-\psi_{n+1}(x)}{x^2\sqrt{1-x^2}} \, dx$,  \\

where \begin{displaymath}\phi_n(x) = \frac{1}{n}\bigl((1+2\sqrt{x})^n+(1-2\sqrt{x})^n-2\bigr)\end{displaymath} and \begin{displaymath}\psi_n(x) = \frac{1}{n}\bigl((1+2x)^n-1\bigr).\end{displaymath}\end{corollary}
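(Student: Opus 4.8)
The plan is to derive both representations in Corollary \ref{cor13} by applying Theorem \ref{thm11} to two of the Catalan integral representations already established, exactly as Corollary \ref{cor12} applies Theorem \ref{thm10}. For part (a), I would start from representation (\ref{eq4}), namely $C_n = \frac{1}{n+1}\cdot\frac{4^n}{\pi}\int_0^1 \frac{x^n}{\sqrt{x-x^2}}\,dx$; to fit the hypothesis of Theorem \ref{thm11} I need to write the integrand in the form $(f(x))^{2n}g(x)$, so I take $f(x)=2\sqrt{x}$ (so that $(f(x))^{2n}=4^n x^n$) and $g(x)=\frac{1}{\pi\sqrt{x-x^2}}$. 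Then Theorem \ref{thm11} immediately gives $M_n = \int_0^1 \frac{\varphi_{n+2}(x)-\varphi_{n+1}(x)}{(f(x))^2}\,g(x)\,dx$ with $\varphi_n(x)=\frac{1}{n}\bigl((1+2\sqrt{x})^n+(1-2\sqrt{x})^n-2\bigr)$, which is precisely the stated $\phi_n(x)$; since $(f(x))^2=4x$ and $g(x)=\frac{1}{\pi\sqrt{x-x^2}}$, the constant works out to $\frac{1}{4\pi}$, matching (a).

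For part (b), I would start from representation (\ref{eq9}), namely $C_n = \frac{2^{2n+1}}{\pi}\int_{-1}^1 x^{2n}\sqrt{1-x^2}\,dx$. This is not yet in the form $\frac{1}{n+1}\int (f(x))^{2n}g(x)\,dx$ required by Theorem \ref{thm11}, so the first move is to divide by $n+1$ and multiply the integrand by $n+1$; more cleanly, I can use the equivalent form $C_n=\frac{1}{n+1}\cdot\frac{4^n}{\pi}\int_{-1}^1 \frac{2x^{2n}\sqrt{1-x^2}}{\,?\,}$ — but the simplest route is to note that (\ref{eq2}), $C_n = \frac{4^n}{(n+1)\pi}\int_{-1}^1 \frac{x^{2n}}{\sqrt{1-x^2}}\,dx$, is already in the right shape. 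Taking $f(x)=x$ and $g(x)=\frac{4^n}{\pi\sqrt{1-x^2}}$ does not work because $g$ must not depend on $n$; instead I take $f(x)=2x$, so $(f(x))^{2n}=4^n x^{2n}$, and $g(x)=\frac{1}{\pi\sqrt{1-x^2}}$. Applying Theorem \ref{thm11} then gives $M_n=\int_{-1}^1 \frac{\varphi_{n+2}(x)-\varphi_{n+1}(x)}{4x^2}\cdot\frac{1}{\pi\sqrt{1-x^2}}\,dx$ with $\varphi_n(x)=\frac{1}{n}\bigl((1+2x)^n+(1-2x)^n-2\bigr)$.

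The one genuine wrinkle — and the step I expect to need the most care — is reconciling this $\varphi_n$ with the claimed $\psi_n(x)=\frac{1}{n}\bigl((1+2x)^n-1\bigr)$, which only involves the single term $(1+2x)^n$ rather than the symmetric sum. The resolution is a change of variable: in the half of the integrand coming from $(1-2x)^{n}$ (and, correspondingly, the $-2$ split as $-1$ and $-1$), substitute $u=-x$; since $\sqrt{1-x^2}$ and $x^2$ are even, this folds $(1-2x)^n$ onto $(1+2x)^n$, and one checks the two pieces combine to give numerator $\psi_{n+2}(x)-\psi_{n+1}(x)$ over $x^2\sqrt{1-x^2}$ with overall constant $\frac{1}{2\pi}$. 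I would carry this out by writing $\varphi_n(x)=\frac{1}{n}\bigl((1+2x)^n-1\bigr)+\frac{1}{n}\bigl((1-2x)^n-1\bigr)=\psi_n(x)+\psi_n(-x)$, splitting the integral accordingly, applying $u=-x$ to the $\psi_n(-x)$ part, and noting that $\varphi_{n+2}-\varphi_{n+1}$ then contributes two equal copies of $\int_{-1}^1\frac{\psi_{n+2}(x)-\psi_{n+1}(x)}{x^2\sqrt{1-x^2}}\,dx$, which halves the constant from $\frac{1}{4\pi}$ to... wait, it doubles, so starting constant $\frac{1}{4\pi}$ times $2$ gives $\frac{1}{2\pi}$, as required. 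A parallel but simpler remark handles part (a), where $\phi_n$ is already symmetric in $\sqrt{x}\mapsto-\sqrt{x}$ so no folding is needed and the constant $\frac{1}{4\pi}$ stands. Finally I would double-check convergence of the integrands near the endpoints, using that $\varphi_{n+2}(x)-\varphi_{n+1}(x)$ vanishes to order $x^2$ at $x=0$ so the apparent singularity from $1/x^2$ (or $1/x$) is removable, and that the $1/\sqrt{1-x^2}$ (resp. $1/\sqrt{x-x^2}$) factor is integrable at the remaining endpoints.
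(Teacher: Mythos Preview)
Your proposal is correct and follows essentially the same approach as the paper: for (a) you apply Theorem~\ref{thm11} to (\ref{eq4}) with $f(x)=2\sqrt{x}$, $g(x)=\frac{1}{\pi\sqrt{x-x^2}}$, and for (b) you apply Theorem~\ref{thm11} to (\ref{eq2}) with $f(x)=2x$, $g(x)=\frac{1}{\pi\sqrt{1-x^2}}$, then fold the $(1-2x)^{n+2}$ and $(1-2x)^{n+1}$ terms via $u=-x$ to pass from $\varphi_n$ to $\psi_n$, exactly as the paper does. Your additional remark about removability of the singularity at $x=0$ and integrability at the other endpoints is a worthwhile check that the paper leaves implicit.
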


\begin{proof} (a) Use (\ref{eq12}) on (\ref{eq4}) with $f(x)=2\sqrt{x}$ and $g(x)=\frac{1}{\pi\sqrt{x-x^2}}$. \\

(b) Use (\ref{eq12}) on (\ref{eq2}) with $f(x)=2x$ and $g(x)=\frac{1}{\pi\sqrt{1-x^2}}$. The result follows by splitting the integral, then using the substitution $u=-x$ on the integrals with the expressions $(1-2x)^{n+2}$ and $(1-2x)^{n+1}$. 
\end{proof} 

\section{Conclusion} 

We give other known Catalan integral representations \cite{NKWA1}: 
\begin{equation*}
C_n =\frac{2^{2n+1}}{\left( 2n+1\right) \pi}\int_{-1}^{1}\frac{x^{2n+2}}{%
\sqrt{1-x^{2}}}\, dx \end{equation*} and \begin{equation*}
 C_n=\frac{4^{n}}{n \pi}\int_{0}^{1}\frac{2x^{n+1}-x^{n}}{\sqrt{x-x^{2}}} \, dx.
\end{equation*}%
Also, from (\ref{eq3}), \begin{equation*}
C_n = \frac{ 4^n}{(n+1)\pi}\int_0^{\pi} \cos^{2n}x \, dx. \end{equation*}See Qi and Guo \cite{FQI} for integral representations of $C_n$ and $\frac{1}{C_n}$. 

We have presented several integral representations of $C_n$ and $M_n$. Finding integral representations of other counting numbers such as the Riordan numbers \seqnum{A005043}, binomial coefficients of the Pascal triangle \seqnum{A109906}, Fine numbers \seqnum{A000957}, and RNA numbers \seqnum{A110320} \cite{SP1} would be of interest. See Qi, Shi, and Guo \cite{FQI1} for interesting integral representations of the little \seqnum{A001003} and large \seqnum{A006318} Schr\"{o}der numbers \cite{SP1}; Dilcher \cite{DILC}, for the even Fibonacci numbers \seqnum{A014445} \cite{SP1}; and Glasser and Zhou \cite{GLAS}, for the Fibonacci numbers \seqnum{A000045} \cite{SP1}.

Given the many combinatorial and analytic interpretations of $C_n$ and $M_n$, an important and interesting problem worth pursuing is to find related analytical interpretations of the integral representations.

\section{Acknowledgment}

The authors would like to thank Rodney Kerby, Leon Woodson, and Guoping Zhang for useful comments on various drafts of the manuscript.

\bigskip
\hrule
\bigskip

\noindent {\it Keywords}: Catalan numbers,  Motzkin numbers, integral representation, Weierstrass substitution.

\bigskip
\hrule
\bigskip

\noindent (Concerned with sequences 
\seqnum{A000045},
\seqnum{A000108}, 
\seqnum{A000957},
\seqnum{A001003},
\seqnum{A001006}, 
\seqnum{A005043},
\seqnum{A006318},
\seqnum{A014445},
\seqnum{A109906},
and \seqnum{A110320}.)

\bigskip
\hrule

\end{document}